\documentclass[11pt,a4paper]{amsart}
\usepackage{amsmath,amsfonts,amsthm,amsopn,color,amssymb,enumitem}
\usepackage{palatino}
\usepackage{graphicx}
\usepackage[english]{babel}
\usepackage{caption}
\usepackage{subcaption}
\usepackage[colorlinks=true]{hyperref}
\hypersetup{urlcolor=blue, citecolor=red, linkcolor=blue}

\usepackage{cite}
\usepackage{esint}
\usepackage[title]{appendix}

\newtheorem{theorem}{Theorem}[section]
\newtheorem{lemma}[theorem]{Lemma}

\newtheorem{proposition}[theorem]{Proposition}



\newcommand{\e}{\varepsilon}
\newcommand{\C}{\mathbb{C}}
\newcommand{\R}{\mathbb{R}}
\newcommand{\Z}{\mathbb{Z}}

\newcommand{\weakto}{\rightharpoonup}

\newcommand{\T}{\mathbb{T}}

\newcommand{\N}{\mathbb{N}}

\renewcommand{\C}{\mathbb{C}}

\def\bbm[#1]{\mbox{\boldmath $#1$}}
\newcommand{\beq }{\begin{equation}}
\newcommand{\eeq }{\end{equation}}


\def\sideremark#1{\ifvmode\leavevmode\fi\vadjust{\vbox to0pt{\vss
 \hbox to 0pt{\hskip\hsize\hskip1em
 \vbox{\hsize3cm\tiny\raggedright\pretolerance10000
  \noindent #1\hfill}\hss}\vbox to8pt{\vfil}\vss}}}%


\setlength{\hoffset}{-0.5cm}
\setlength{\textwidth}{14cm}

\begin{document}

\title[traveling waves for the Gross-Pitaevskii equation in tori]{Existence and nonexistence of traveling waves for the Gross-Pitaevskii equation in tori}

\author{Francisco Javier Mart\'{i}nez S\'{a}nchez}
       \address{Francisco Javier Martinez S\'{a}nchez}
 \email{javims96@correo.ugr.es}

\author{David Ruiz}
  \address{David Ruiz \\
    IMAG, Universidad de Granada\\
    Departamento de An\'alisis Matem\'atico\\
    Campus Fuentenueva\\
    18071 Granada, Spain}
  \email{daruiz@ugr.es}

\thanks{D. R. has been supported by the FEDER-MINECO Grant PGC2018-096422-B-I00 and by J. Andalucia (FQM-116). He also acknowledges financial support from the Spanish Ministry of Science and Innovation (MICINN), through the \emph{IMAG-Maria de Maeztu} Excellence  Grant CEX2020-001105-M/AEI/10.13039/501100011033.}





\maketitle

\begin{center} \emph{On June 29, 2020, Ireneo posted: ``I miss my normal life''. We do miss you having your normal life, my friend.}
	
\end{center}

\begin{abstract}
	In this paper we consider traveling waves for the Gross-Pitaevskii equation which are $T$-periodic in each variable. We prove that if $T$ is large enough, there exists a solution as a global minimizer of the corresponding action functional. In the subsonic case, we can use variational methods to prove the existence of a mountain-pass solution. 
	Moreover, we show that for small $T$ the problem admits only constant solutions.  
\end{abstract}

\section{Introduction}
In this paper we are concerned with the Gross-Pitaevskii equation

\begin{equation}\label{GP}
i \partial_t \Psi=\Delta \Psi+\Psi\left(1-|\Psi|^2\right)  \text{on } \R^N \times \R.
\end{equation}
Here $\Psi$ is the wave function and $N=2$ or $3$ is the spatial dimension. This is a Nonlinear Schr\"{o}dinger Equation under the effect of a Ginzburg-Landau potential. The Gross-Pitaevskii equation was proposed in 1961 (\cite{gross, pita}) to model a quantum system of bosons in a Bose-Einstein condensate, via a Hartree-Fock approximation (see also \cite{b1, b2, JPR0, JPR}). It appears also in other contexts such as the study of dark solitons in nonlinear optics (\cite{k1, k2}).

From the point of view of the dynamics, the Cauchy problem for the Gross-Pitaevskii equation  was first studied in one space dimension by Zhidkov \cite{Z} and in dimension $N=2,3$
by B\'{e}thuel and Saut \cite{1999} (see also \cite{ge1,ge2, killip}). At least formally, equation \eqref{GP} presents two invariants, namely:
\begin{itemize}
	\item \emph{Energy:} 
	\[
	E= \int \frac 12 |\nabla \Psi|^2 +\frac 14  \left(1-|\Psi|^2\right)^2,
	\]
	\item \emph{Momentum:} 
	\[
	\mathcal{\bf{P}}=\frac 12 \int (i \nabla \Psi) \cdot  \Psi, 
	\]
	where $ f \cdot g =Re(f)Re(g)+Im(f)Im(g)$. For later use the first component of the momentum will be of special interest:
	\[
	P=\frac 12 \int  (i \partial_{x_1}\Psi) \cdot  \Psi.
	\]
	
\end{itemize}

In this work, we are concerned with the existence of periodic traveling waves of (\ref{GP}). Traveling waves for (\ref{GP}) are special solutions to (\ref{GP}) of the form 
\begin{equation}\label{eq:ansatz}
\Psi(x,t)=\psi(x_1-ct, \tilde{x}), \ \ \tilde{x}=(x_2 \dots x_N) \in \R^{N-1},
\end{equation}
where the parameter $c >0$ characterizes the speed of the traveling wave and $x_1$ indicates the direction of the wave. By the ansatz \eqref{eq:ansatz} the equation for the profile $\psi$ is given by
\begin{equation}\tag{TWc}\label{TWc}
ic \partial_{x_1} \psi + \Delta \psi + (1-\vert \psi\vert^2)\psi  = 0.
\end{equation}

The case of solutions $\psi: \R^N \to \C$ with finite energy has attracted a lot of attention in the literature. The existence, nonexistence and qualitative behavior has been very much studied as part of the so-called Jones-Putterman-Roberts program. In particular, in \cite{JPR0, JPR} it was conjectured that such solutions exist only if $c \in (0, \sqrt{2})$. The value $c= \sqrt{2}$ is interpreted as the speed of sound, and is related to the behavior of the linearization around the constant solutions of modulus 1. Indeed, finite energy traveling waves for supersonic speed $c>\sqrt{2}$ are constant, see \cite{gravejat-CMP}. In dimension $N=2$ this result holds also for $c=\sqrt{2}$, see \cite{gravejat-DIA}. 

For small $c>0$ existence of solutions were proved in \cite{1999}, see also \cite{ao, NA2004, chr2, chr3, jer2, lw, lwy} for its asymptotic behavior and multiplicity results. A general existence result for all $c \in (0, \sqrt{2})$ was missing until the work \cite{Maris2}, where the case $N \geq 3$ is addressed. For the planar case $N=2$, an existence result for almost all $c \in (0, \sqrt{2})$ has been recently given, see \cite{noi}. More references can be found in the survey \cite{survey}.

In this paper we are concerned with the doubly periodic case, that is, solutions which are $T$-periodic in all variables $x_i$. This question has been addressed in \cite{jems, CMP09} as a tool to get finite energy solutions as the period goes to infinity. The approach of \cite{jems, CMP09} consists in minimizing the energy under a constraint $P(\psi) = p$. In this way the speed $c$ appears as a Lagrange multiplier and is not controlled.

It is to be noted that, as commented in \cite{CMP09}, the case of periodic solutions is interesting in its own right. The main goal of this paper is to give existence and nonexistence results of periodic traveling waves with fixed speed $c$.

In general, $T$-periodic solutions of \eqref{TWc} are stationary points of the action functional:  $$  I_T^c:H_T^1(\R^N)\rightarrow \mathbb{R}, \ I= E - c \, P$$ where $H^1_T(\R^N)$ is the Sobolev space of $T$-periodic complex-valued functions. More precisely,

\begin{equation}\label{lagrangian}
I_T^c(\psi)=\frac{1}{2}\int_{\T(T)} \vert \nabla\psi\vert^2\ dx + \frac{1}{4}\int_{\T(T)} (1-\vert\psi\vert^2)^2\ dx - \frac{c}{2} \int_{\T(T)}  (i\partial_{x_1}\psi) \cdot \psi\, dx,
\end{equation}
with $\T(T)=[0,T]^N$.

In this paper we first use variational methods to give existence of solutions. Indeed the functional $I_T^c$ attains its infimum, which is a nonconstant solution for $T$ sufficiently large. Being more specific, we can prove the following result:

\begin{theorem}\label{E} For $N=2$, $3$, and for any $c>0$ there exists $\bar{T}(c)>0$ such that, for any $T> \bar{T}(c)$, there exists a non-constant $T$-periodic solution $\bar{\psi}_T$ of \eqref{TWc}. This solution is a global minimizer for $I_T^c$, and 
$$ I_T^c(\bar{\psi}_T) <0.$$
\end{theorem}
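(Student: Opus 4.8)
The plan is to obtain $\bar\psi_T$ by the direct method of the calculus of variations applied to $I_T^c$ on $H^1_T(\R^N)$, so the real content splits into two parts: (i) showing that $I_T^c$ is bounded below and coercive enough that minimizing sequences converge, and (ii) showing that for $T$ large the infimum is strictly negative, which forces the minimizer to be non-constant (since any constant $\psi\equiv a$ has $\nabla\psi=0$, $P(\psi)=0$, and $I_T^c(a)=\tfrac14 T^N(1-|a|^2)^2\ge 0$). Step (ii) is the heart of the matter; step (i) is where one must be careful, because the momentum term $-\tfrac c2\int (i\partial_{x_1}\psi)\cdot\psi$ is sign-indefinite and only quadratic, so a naive estimate does not immediately dominate it by the gradient term.

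For the lower bound and compactness I would argue as follows. Write $\psi = \psi_1 + i\psi_2$ with $\psi_1,\psi_2$ real and $T$-periodic. The momentum term is $\int (\psi_2\,\partial_{x_1}\psi_1 - \psi_1\,\partial_{x_1}\psi_2)$, which one can bound using Young's inequality as $\le \eta\int|\nabla\psi|^2 + C_\eta \int |\psi|^2$ for any $\eta>0$; but $\int|\psi|^2$ is in turn controlled by the potential term $\tfrac14\int(1-|\psi|^2)^2$ up to an additive constant depending on $T$ (since $|\psi|^2 \le \tfrac12(1-|\psi|^2)^2 + C$ pointwise). Choosing $\eta$ small compared to $1$ and $C_\eta$ absorbed appropriately, one gets $I_T^c(\psi) \ge \tfrac14\int|\nabla\psi|^2 + \tfrac18\int(1-|\psi|^2)^2 - C(T,c)$, which shows $I_T^c$ is bounded below and that a minimizing sequence $(\psi_n)$ is bounded in $H^1_T(\R^N)$. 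By Rellich on the torus $\T(T)$, after passing to a subsequence $\psi_n \weakto \bar\psi_T$ in $H^1_T$ and strongly in $L^2$ and $L^4$; weak lower semicontinuity of the gradient and potential terms, together with continuity of the momentum term under strong $L^2$ convergence of $\psi_n$ and weak $L^2$ convergence of $\partial_{x_1}\psi_n$, gives $I_T^c(\bar\psi_T)\le \liminf I_T^c(\psi_n) = \inf I_T^c$. Hence $\bar\psi_T$ is a minimizer and a weak, then (by elliptic regularity for \eqref{TWc}) classical, solution.

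It remains to produce a competitor with negative action for large $T$, which I expect to be the main obstacle. The natural idea is to take, for $T$ large, a fixed finite-energy traveling wave $\psi_\infty$ of \eqref{TWc} on $\R^N$ at speed $c$ — whose existence for every $c\in(0,\sqrt2)$ is known (\cite{Maris2} for $N\ge3$, \cite{noi} for $N=2$), and which satisfies $I^c_\infty(\psi_\infty) = E(\psi_\infty) - cP(\psi_\infty) < 0$ by the Pohozaev-type identities for such solutions — and to ``periodize'' it: cut it off outside a box of side $R \ll T$, so that $\psi_\infty$ is replaced by $1$ (or a constant of modulus one) near the boundary, and extend periodically. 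The cut-off introduces an error in $E$ and $P$ that tends to $0$ as $R\to\infty$ because $\psi_\infty$ decays to its limit at infinity; hence for $R$ large the periodized competitor $\psi_{T,R}$ still has $I_T^c(\psi_{T,R}) < 0$, provided $T > 2R$ (say). This gives $\inf I_T^c < 0$ once $T$ is larger than some $\bar T(c)$, completing the argument. (If one wants to avoid invoking the full existence theory on $\R^N$, an alternative for the supersonic and sonic range $c\ge\sqrt2$ — where no such $\psi_\infty$ exists — would be to build an explicit test function, e.g. a localized modulation of a constant, and compute directly that the momentum gain beats the gradient and potential costs when $T$ is large; I would expect the authors in fact to use a self-contained test-function construction valid for all $c>0$, which is the delicate computation to get right.)

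The one step I flag as genuinely nontrivial is the construction of the negative-action competitor uniformly in $c>0$: for subsonic $c$ the momentum term can be made to dominate, but near and above the speed of sound one must choose the profile of the test function carefully (its amplitude, its length scale in the $x_1$-direction versus the transverse directions) so that the quadratic momentum term $-\tfrac c2 P$ still outweighs $\tfrac12\int|\nabla\psi|^2 + \tfrac14\int(1-|\psi|^2)^2$; the gain here comes precisely from the volume growth of $\T(T)$, and making this quantitative is where the bulk of the work lies.
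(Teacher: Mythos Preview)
Your treatment of step (i) is essentially the paper's: Lemma~2.1 (weak lower semicontinuity via Rellich on the torus, momentum weakly continuous) and Lemma~2.2 (coercivity from the pointwise bound $(1-x^2)^2\ge 4\lambda x^2-K_\lambda$ combined with Young on the momentum term) match your sketch almost line by line.

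The genuine gap is in step (ii). Your primary proposal---take a finite-energy traveling wave $\psi_\infty$ on $\R^N$, cut off, periodize---fails, because such solutions do \emph{not} satisfy $E(\psi_\infty)-cP(\psi_\infty)<0$. On $\R^N$ these are mountain-pass type critical points of the action, sitting at a \emph{positive} level; this is exactly why Theorem~\ref{MP} in the paper constructs a mountain-pass solution at a positive level $\gamma(T)>0$ and conjectures it converges to the $\R^N$ solution as $T\to\infty$. (Heuristically: the energy-momentum curve $P\mapsto E_{\min}(P)$ is concave with $E_{\min}(0)=0$ and $c=E'(P)$, so $E-cP\ge 0$ at the minimizer.) So periodizing $\psi_\infty$ yields a competitor with positive, not negative, action. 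Your Pohozaev remark does not rescue this.

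What the paper actually does for (ii) is your fallback guess, made precise by Lemma~4.4 of Mari\c{s} \cite{Maris2}: there is an explicit compactly supported $\upsilon_R$ with
\[
\int_{\R^N}|\nabla \upsilon_R|^2 \lesssim R^{N-2}\log R,\qquad
\int_{\R^N}(1-|1+\upsilon_R|^2)^2 \lesssim R^{N-2},\qquad
P(1+\upsilon_R)\sim R^{N-1}.
\]
Since the momentum scales with a strictly higher power of $R$ than the energy, for \emph{any} $c>0$ one has $E(1+\upsilon_R)-cP(1+\upsilon_R)<0$ once $R$ is large; then one simply takes $\bar T(c)>\operatorname{diam}(\operatorname{supp}\upsilon_R)$ and extends periodically. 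Note that the mechanism is this $R$-scaling of the test function, not ``the volume growth of $\T(T)$'' as you suggest: once $T$ exceeds the support diameter, the action $I_T^c(1+w_R)$ is independent of $T$.
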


More interestingly, one can also show the existence of a mountain pass solution. The main idea is that the constant solutions of modulus 1 form a nondegenerate curve of local minimizers if $c \in (0, \sqrt{2})$, and that the global minimizer given in Theorem \ref{E} has negative energy.

\begin{theorem}\label{MP} For $N=2$, $3$ and any $c \in (0, \sqrt{2})$ there exists $\bar{T}(c)>0$ such that, for any $T> \bar{T}(c)$, there exists a non-constant $T$-periodic solution $\tilde{\psi}_T$ of \eqref{TWc}. This solution is a mountain-pass solution for $I_T^c$, and 
	
\begin{equation} \label{eestimate} 0 <  I_T^c(\tilde{\psi}_T) \leq M(c), \end{equation}
for some $M(c)>0$ independent of $T$.
\end{theorem}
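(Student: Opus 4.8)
The plan is to apply a mountain-pass scheme to the functional $I_T^c$ on $H^1_T(\R^N)$, using as the two "ends" of the mountain-pass geometry a constant solution of modulus one on one side and the global minimizer $\bar\psi_T$ from Theorem \ref{E} on the other. First I would establish that the curve of constants $\{e^{i\theta}: \theta \in \R\}$ consists of local minimizers of $I_T^c$ when $c \in (0,\sqrt 2)$: writing $\psi = e^{i\theta}(1+v)$ with $v$ small in $H^1_T$, one expands $I_T^c$ and checks that the quadratic form in $v$ is positive definite modulo the one-dimensional kernel generated by the phase invariance $\theta \mapsto \theta + s$. Here the strict subsonicity $c < \sqrt 2$ is exactly what makes the quadratic form coercive (the Fourier symbol of the linearized operator, something like $|k|^4 + |k|^2(\text{stuff}) - c^2 k_1^2$ after eliminating the modulus variable, stays bounded below away from zero on the nonzero lattice frequencies, uniformly in $T$). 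This gives a uniform $\rho = \rho(c) > 0$ and $\mu = \mu(c) > 0$ with $I_T^c(\psi) \geq \mu$ whenever $\dist_{H^1_T}(\psi, \mathbb{S}^1) = \rho$, and these are independent of $T$ because the computation is frequency-wise on the lattice $\frac{2\pi}{T}\Z^N$ which only gets finer as $T$ grows.

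Next I would set up the min-max. Since $I_T^c(1) = 0$ and, by Theorem \ref{E}, $I_T^c(\bar\psi_T) < 0$, for $T$ large the point $\bar\psi_T$ lies outside the ball of radius $\rho$ around the constants and is separated from $1$ by the mountain-pass "ring" on which $I_T^c \geq \mu$. Define
$$c_T = \inf_{\gamma \in \Gamma} \max_{s\in[0,1]} I_T^c(\gamma(s)), \qquad \Gamma = \{\gamma \in C([0,1], H^1_T): \gamma(0) = 1,\ \gamma(1) = \bar\psi_T\}.$$
Then $c_T \geq \mu > 0$, giving the lower bound in \eqref{eestimate}. For the upper bound $c_T \leq M(c)$ independent of $T$, I would exhibit an explicit competitor path: interpolate from the constant $1$ to $\bar\psi_T$ in a way whose energy cost is controlled — for instance, first deform $1$ to a fixed "nice" non-constant configuration $\phi_0$ concentrated in a unit cube (independent of $T$, extended periodically and costing a bounded amount of energy since all the relevant integrands are supported where $\phi_0$ is non-trivial), then note that $I_T^c(\phi_0)$ is bounded by a constant and connect $\phi_0$ to $\bar\psi_T$ along a segment — but that last leg is delicate since $\bar\psi_T$ depends on $T$. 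A cleaner route: use the one-parameter family $\{\psi_\lambda\}$ obtained from a finite-energy traveling wave in $\R^N$ (existence for $c \in (0,\sqrt 2)$ being available in the cited literature for $N=3$, and for $N=2$ one may need to restrict $c$ or argue differently), truncated to $\T(T)$; scaling this profile produces a path from near-$1$ to a low-energy state with max energy bounded uniformly in $T$, and then one argues $c_T$ is at most this bounded max.

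Then I would verify the Palais–Smale condition for $I_T^c$ at level $c_T$. On the torus $\T(T)$ the embedding $H^1_T \hookrightarrow L^p_T$ is compact, so a PS sequence is bounded (coercivity of $E$ controls $\|\nabla\psi\|_2$ and $\|1-|\psi|^2\|_2$, while the momentum term is subordinate) and, up to a subsequence, converges weakly with strong convergence in $L^p$, from which one upgrades to strong $H^1_T$ convergence using the equation. This yields a critical point $\tilde\psi_T$ at level $c_T \in [\mu, M(c)]$, hence a $T$-periodic solution of \eqref{TWc} satisfying \eqref{eestimate}. Finally, $\tilde\psi_T$ is non-constant: the only constant solutions are $0$ and the modulus-one constants, which have $I_T^c$-values $\frac{T^N}{4}$ and $0$ respectively, neither equal to $c_T \in [\mu, M(c)]$ for $T$ large (for the value $\frac{T^N}{4}$ this needs $M(c) < \frac{T^N}{4}$, true once $T$ is large; note the modulus-one constants are excluded because $c_T \geq \mu > 0$).

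The main obstacle I expect is the $T$-uniform upper bound $c_T \leq M(c)$: one must produce a mountain-pass path whose maximal energy does not blow up as $T \to \infty$, and the natural candidate endpoint $\bar\psi_T$ is only known abstractly, so some care is needed either to route the path through a $T$-independent intermediate configuration or to use a known finite-energy traveling-wave profile of $\R^N$ as a template, handling the planar case $N=2$ with its subtler existence theory separately. The local-minimizer analysis of the constants (the nondegeneracy and the $T$-uniformity of $\rho, \mu$) is the other technical heart, but it is a fairly standard subsonic linearization computation.
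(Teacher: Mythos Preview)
Your overall architecture matches the paper's: show that $Z=\{e^{i\theta}\}$ is a nondegenerate curve of local minimizers for $c\in(0,\sqrt 2)$ (Lemma~\ref{Z}), verify Palais--Smale (Proposition~\ref{qqq10}), and apply the Mountain Pass lemma. The gap is exactly where you flag it, the uniform upper bound $c_T\le M(c)$, and neither of your two routes closes it.

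Your second route---using a finite-energy traveling wave in $\R^N$ as a template---is circular in this context: the paper states explicitly that existence of such waves in $\R^2$ for all $c\in(0,\sqrt 2)$ is open, and obtaining them as limits of the $\tilde\psi_T$ is the declared motivation for the theorem. Your first route founders because you insist on $\bar\psi_T$ as the second endpoint, which forces an uncontrolled ``last leg'' from any fixed $\phi_0$ to the abstract, $T$-dependent minimizer. The paper's resolution is simply not to use $\bar\psi_T$ at all. It takes as second endpoint the \emph{explicit} function $1+w_R$, where $w_R$ is the compactly supported perturbation from Lemma~\ref{4.4} (due to \cite{Maris2}), with $R$ fixed large enough that $I_T^c(1+w_R)<0$. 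The mountain-pass class is $\Gamma=\{\gamma:\gamma(0)=1,\ \gamma(1)=1+w_R\}$, and the linear path $\alpha_0(t)=1+tw_R$ does the job: since $\mathrm{supp}\,w_R\subset\T(T)$ for all $T>\bar T$, the integrand of $I_T^c(1+tw_R)$ vanishes outside $\mathrm{supp}\,w_R$, so $I_T^c(\alpha_0(t))$ is literally independent of $T$, and $M(c)=\max_{t\in[0,1]}I_T^c(1+tw_R)$ works. You were close to this with your ``fixed $\phi_0$ concentrated in a cube'' idea; what you missed is that a suitable $\phi_0$ can itself have negative action (this is what Lemma~\ref{4.4} provides, though it needs a cube of large fixed size, not a unit cube), so no further connection is needed.

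A secondary correction: your claim that $\rho,\mu$ can be taken uniform in $T$ is wrong. For $\phi=iv$ with $v$ real and mean-zero, one has $(I_T^c)''(1)[\phi,\phi]=\int_{\T(T)}|\nabla v|^2$ while $\|\phi\|^2=\int_{\T(T)}|\nabla v|^2+v^2$, and the ratio degenerates as $T\to\infty$ along the lowest nonzero frequency. The paper's Lemma~\ref{Z} only yields $\varepsilon=\varepsilon(T)>0$, which suffices since the theorem asserts merely $I_T^c(\tilde\psi_T)>0$, not a uniform positive lower bound.
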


The proof of the above theorem is the original motivation of this paper. The reason is that one can conjecture that the above solution converges locally, as $T \to +\infty$, to a finite energy solution in $\R^N$. This strategy could be of use in the future to prove the existence of finite energy solutions in $\R^2$ for all $c \in (0, \sqrt{2})$, a problem that remains open in its full generality despite many attempts. In order to pass to the limit, one of the main challenges could be to find uniform bounds on the energy.

\medskip 

The above theorems have been stated for dimensions $N=2$, $3$. Under minor changes everything works also in dimension $4$; the only point there is that the nonlinear term in $I_T^c$ becomes critical in the sense of the Sobolev embeddings. For higher dimensions, $I_T^c$ is not well defined in $H^1_T(\R^N)$, and hence a truncation would be in order. For the sake of simplicity, we have prefered to restrict ourselves to the physically relevant dimensions 2 and 3.

\medskip

With those results at hand, the first question that arises naturally is whether the size requirement on the period $T$ is necessary or not. In the next theorem we show that this is indeed the case.

\begin{theorem}\label{daru}
	For all $c >0$, there exists $T^*>0$ such that for any $T \in (0, T^*)$, any $T$-periodic solution of \eqref{TWc} is necessarily constant.
\end{theorem}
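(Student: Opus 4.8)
The plan is to show that on a small torus the Poincar\'e--Wirtinger inequality forces the oscillatory part of any solution to vanish. Given a $T$-periodic solution $\psi$ of \eqref{TWc}, I would write $\psi = a + \phi$ where $a = T^{-N}\int_{\T(T)}\psi$ is its mean and $\phi$ satisfies $\int_{\T(T)}\phi = 0$; the goal is then to prove that $\phi \equiv 0$ once $T$ is small, which gives that $\psi$ is constant. Throughout I may assume $\psi$ is smooth, since weak $H^1_T(\R^N)$ solutions are smooth by the standard elliptic bootstrap for the semilinear equation \eqref{TWc}.

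\emph{Step 1: an a priori $L^\infty$ bound, uniform in $T$.} First I would establish $\|\psi\|_{L^\infty} \le K(c) := \sqrt{1 + c^2/2}$. Setting $\rho = |\psi|^2$ and using \eqref{TWc} one computes $\Delta \rho = 2|\nabla\psi|^2 + 2c\,\mathrm{Im}(\bar\psi\,\partial_{x_1}\psi) + 2\rho^2 - 2\rho$; bounding the first-order term by $2c|\psi||\nabla\psi| \le |\nabla\psi|^2 + c^2\rho$ gives $\Delta\rho \ge 2\rho^2 - (2+c^2)\rho$. Evaluating this at a maximum point of $\rho$ on the compact torus, where $\Delta\rho \le 0$, yields $\rho \le 1 + c^2/2$ everywhere (unless $\psi \equiv 0$, in which case there is nothing to prove). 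In particular $\|\phi\|_{L^\infty} \le 2K(c)$, again with a bound independent of $T$.

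\emph{Step 2: energy identity and the key estimate.} Multiplying \eqref{TWc} by $\bar\phi$, integrating over $\T(T)$, using $\Delta\psi = \Delta\phi$ and $\int_{\T(T)}\Delta\phi\,\bar\phi = -\|\nabla\phi\|_{L^2}^2$, and taking real parts, one obtains
\[
\|\nabla\phi\|_{L^2}^2 = -c\,\mathrm{Im}\int_{\T(T)}\partial_{x_1}\phi\,\bar\phi \;+\; \int_{\T(T)}(1-|\psi|^2)\,\mathrm{Re}(\psi\bar\phi).
\]
The first term is controlled by $c\,\|\partial_{x_1}\phi\|_{L^2}\|\phi\|_{L^2} \le c\,\|\nabla\phi\|_{L^2}\|\phi\|_{L^2}$. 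For the second term, the crucial point is to extract the right power of $\|\phi\|_{L^2}$: expanding $\mathrm{Re}(\psi\bar\phi) = \mathrm{Re}(a\bar\phi) + |\phi|^2$ and $|\psi|^2 = |a|^2 + 2\mathrm{Re}(a\bar\phi)+|\phi|^2$, using $\int_{\T(T)}\mathrm{Re}(a\bar\phi) = 0$ to cancel all ``constant'' pieces, and estimating the remaining terms with the bounds $|a| \le K(c)$ and $\|\phi\|_{L^\infty}\le 2K(c)$ from Step 1, one arrives at
\[
\Big|\int_{\T(T)}(1-|\psi|^2)\,\mathrm{Re}(\psi\bar\phi)\Big| \le C(c)\,\|\phi\|_{L^2}^2,
\]
with $C(c)$ depending only on $c$.

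\emph{Step 3: conclusion.} On $[0,T]^N$ the first nonzero eigenvalue of $-\Delta$ with periodic boundary conditions is $(2\pi/T)^2$, so $\|\phi\|_{L^2} \le \tfrac{T}{2\pi}\|\nabla\phi\|_{L^2}$. Inserting this and Step 2 into the identity above gives
\[
\|\nabla\phi\|_{L^2}^2 \le \left(\frac{cT}{2\pi} + \frac{C(c)\,T^2}{4\pi^2}\right)\|\nabla\phi\|_{L^2}^2,
\]
and choosing $T^* = T^*(c)$ so that the bracket is $<1$ for every $T \in (0,T^*)$ forces $\nabla\phi \equiv 0$, hence $\phi \equiv 0$ and $\psi$ constant. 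The main obstacle is Step 2: a naive estimate of the cubic term would only produce a bound of the form $C(c)\,T^{N/2}\|\phi\|_{L^2}$, which has the wrong homogeneity and does not close the iteration; obtaining the quadratic bound $C(c)\|\phi\|_{L^2}^2$ requires both the $T$-uniform $L^\infty$ control of Step 1 and a careful bookkeeping of the cancellations coming from $\phi$ having zero mean. The remaining ingredients --- interior regularity, the maximum principle, and the Poincar\'e inequality --- are standard.
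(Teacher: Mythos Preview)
Your argument is correct and is, in fact, cleaner than the paper's. The paper proceeds by contradiction: it takes a sequence of $T_n$-periodic solutions with $T_n\to 0$, invokes the uniform $L^\infty$ bound of Farina together with elliptic estimates to pass to a $C^k$ limit $\psi_0$, and then splits into the two cases $\psi_0=0$ and $|\psi_0|=1$. The second case is handled through a lifting $\psi_n=\rho_n e^{i\theta_n}$, requires showing that $\theta_n$ is genuinely periodic for large $n$, and combines several cancellations with a weighted Poincar\'e inequality (their Lemma~\ref{Druiz}) applied separately to $1-\rho_n$ and $\theta_n$.

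Your route avoids all of this. By decomposing $\psi=a+\phi$ with $\phi$ of zero mean, you need only the \emph{standard} Poincar\'e inequality, and the case split disappears because the constant $a$ is never forced to be either $0$ or of modulus one. The $L^\infty$ bound you derive in Step~1 is essentially Farina's argument made self-contained, and the algebra in Step~2 is exactly what absorbs the cubic nonlinearity into $C(c)\|\phi\|_{L^2}^2$ --- the zero-mean cancellation $\int_{\T(T)}\mathrm{Re}(a\bar\phi)=0$ kills the only term that would otherwise scale badly. The paper's approach, relying on compactness and a lifting, is perhaps more in the spirit of bifurcation-from-a-known-limit arguments and could be transported to settings where an explicit mean-subtraction is less natural; your approach is direct, quantitative (one can read off an explicit $T^*(c)$), and does not need any soft compactness step.
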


The proof of the above theorem is by contradiction. If we assume the existence of $T_n$ periodic solutions $\psi_n$ with $T_n\to 0$, by uniform $L^\infty$ estimates (see \cite{farina}) and regularity arguments, one can pass to the limit in $C^k$ sense. In this way the solutions converge to a constant solution $\psi_0$. Constant solutions of \eqref{TWc} are either $0$ or a complex number of modulus one. The idea of the proof is that for $n$ sufficiently large, $\psi_n$ becomes exactly equal to its limit.

In both cases, the proof uses as a main tool the min-max characterization of the first nontrivial eigenvalue of the Laplacian. The case $\psi_0=0$ follows from a somewhat simple manipulation. The case $|\psi_0|=1$ is more delicate. First, it requires the use of a lifting, that is, to write the solution as $\psi_n= \rho_n e^{i \theta_n}$, for some functions $\rho_n : \R^N \to \R^+$, $\theta_n : \R^N \to \R$. Here it is important to realize that $\theta_n$ becomes periodic for large $n$. Finally we combine some cancellations with the Poincar\'{e} inequality for the functions $1-\rho_n$ and $\theta_n$ to conclude. 

The existence results commented above will be presented in Section 2. Section 3 is devoted to the nonexistence result given in Theorem \ref{daru}.

\section{Existence results}

In this section we will prove the existence of $T$-periodic solutions to \eqref{TWc} for large $T$. As commented in the introduction, our proof is variational, and we will consider (weak) solutions as critical points of the action functional $I_T^c: H^1_T(\R^N) \to \R$. We will denote the usual scalar product $H^1_T(\R^N)$,

$$ \langle \phi, \ \psi \rangle = \int_{\T(T)} \nabla \phi \cdot \nabla \psi + \phi \cdot \psi= \int_{\T(T)} \sum_{k=1}^N (\partial_{x_k} \phi) \cdot (\partial_{x_k} \psi) + \phi \cdot \psi, $$
where $\T(T)=[0,T]^N$. The norm is then denoted as:
$$ \| \phi \|^2 = \langle \phi,\ \phi \rangle. $$

Other norms will be denoted with a subscript.

\subsection{Proof of Theorem \ref{E}}

\begin{lemma} \label{wlsc}
	The functional $I_T^c$ is weakly lower semicontinuous for all $c,T >0$.
\end{lemma}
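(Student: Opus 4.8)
The plan is to split the functional into three pieces,
$$ I_T^c(\psi) = \frac12 \int_{\T(T)} |\nabla\psi|^2\,dx + \frac14 \int_{\T(T)}(1-|\psi|^2)^2\,dx - \frac{c}{2}\int_{\T(T)}(i\partial_{x_1}\psi)\cdot\psi\,dx, $$
and to show that the first piece is weakly lower semicontinuous while the other two are in fact weakly \emph{continuous} on $H^1_T(\R^N)$. So I would fix a sequence $\psi_n \weakto \psi$ in $H^1_T(\R^N)$; since $\|\psi_n\|$ is bounded, the compact Sobolev embeddings $H^1_T(\R^N) \hookrightarrow L^q(\T(T))$ are available, and for $N=2,3$ they hold in particular for $q=2$ and $q=4$. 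Hence, along a subsequence, $\psi_n \to \psi$ strongly in $L^2(\T(T))$ and in $L^4(\T(T))$. (To prove $\liminf I_T^c(\psi_n) \geq I_T^c(\psi)$ it suffices to argue along a subsequence realizing the $\liminf$ and then extract a further subsequence with these strong convergences, so passing to subsequences is harmless.)

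For the gradient term I would just invoke the classical fact that $\psi \mapsto \int_{\T(T)} |\nabla\psi|^2$, being convex and strongly continuous, is weakly lower semicontinuous — equivalently, that $\|\cdot\|_{L^2}$ is weakly lower semicontinuous, applied to $\nabla\psi_n \weakto \nabla\psi$ in $L^2(\T(T))$. For the Ginzburg–Landau term I would expand $(1-|\psi|^2)^2 = 1 - 2|\psi|^2 + |\psi|^4$, and pass to the limit in each summand using the strong $L^2$ and $L^4$ convergence of $\psi_n$; this gives full continuity of this term.

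The term that requires a genuine argument is the momentum term, because it is indefinite and hence not convex, so weak lower semicontinuity is not formal. The point I would emphasize is that it is nevertheless weakly continuous: writing $\psi = u + iv$ with $u,v$ real-valued, one has $(i\partial_{x_1}\psi)\cdot\psi = v\,\partial_{x_1}u - u\,\partial_{x_1}v$, so the term is a bilinear pairing of a factor carrying one derivative (which converges only weakly in $L^2$) against a zeroth-order factor (which, by the compact embedding, converges strongly in $L^2$). Since the product of a weakly $L^2$-convergent sequence with a strongly $L^2$-convergent one passes to the limit, the momentum term converges. Collecting the three facts gives $\liminf I_T^c(\psi_n) \geq I_T^c(\psi)$.

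The main obstacle is thus precisely the momentum term: one has to recognise that its lack of convexity is harmless because the offending first-order factor is always multiplied by a factor on which the Sobolev embedding is compact. Everything else is standard; the only mild care needed is in setting up the subsequence extraction correctly, which is routine.
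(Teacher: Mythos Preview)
Your argument is correct and follows exactly the paper's proof: split $I_T^c$ into the Dirichlet term (weakly lower semicontinuous by convexity/weak l.s.c.\ of the $L^2$ norm), the Ginzburg--Landau term (weakly continuous via the compact embedding $H^1_T\hookrightarrow L^4$), and the momentum (weakly continuous as a strong$\times$weak $L^2$ pairing). The paper is slightly terser on the momentum term, just citing $\psi_n\to\psi$ in $L^2$ and $\partial_{x_1}\psi_n\rightharpoonup\partial_{x_1}\psi$ in $L^2$, but this is precisely your observation.
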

\begin{proof}
	Let $\lbrace \psi_n \rbrace \subset H^1(\T(T))$ be a sequence weakly convergent to some $\psi\in H^1(\T(T))$. On one hand, $\nabla\psi_n \rightharpoonup \nabla\psi$ in $L^2(\T(T))$ and by the weak lower semicontinuity,
	\[ \int _{\T(T)} \vert \nabla \psi \vert ^2 \ dx \leq \liminf_{n \to +\infty} \int _{\T(T)} \vert \nabla \psi_n \vert ^2 \ dx . \] On the other hand, by Rellich-Kondrachov theorem, there is a subsequence of $\lbrace \psi _n \rbrace$ strongly convergent to $\psi$ in $L^2(\T(T))$ and $L^4(\T(T))$. Then, up to such subsequence,
	\[ \int _{\T(T)} (1-\vert \psi \vert ^2)^2\ dx = \lim_{n \to +\infty }\int_{\T(T)} (1-\vert \psi _n\vert ^2)^2\ dx .\]
	This completes the weakly lower semicontinuity of the energy $E$. Regarding the momentum $p$, recall that $\psi_n \rightarrow \psi$ in $L^2 (\T(T))$ and $\partial_{x_1} \psi_n \rightharpoonup \partial_{x_1}\psi$ in $L^2 (\T(T))$, therefore   $ \lim p(\psi _n) = p(\psi) $ and $p$ is weakly continuous. Finally, $I_T^c = E-cp$ is weakly lower semicontinuous in view of the weakly lower semicontinuity of  $E$ and the weak continuity of  $p$. 
\end{proof}

\begin{lemma}\label{ppp10}
	The functional $I_T^c$  is coercive for all $c,T >0$.
\end{lemma}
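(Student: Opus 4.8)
The plan is to bound $I_T^c$ from below by a function of the two quantities $\|\nabla\psi\|_{L^2}^2$ and $\|\psi\|_{L^2}^2$ which tends to $+\infty$ as their sum does. First I would control the momentum term: since pointwise $|(i\partial_{x_1}\psi)\cdot\psi|\le |\partial_{x_1}\psi|\,|\psi|$, Cauchy--Schwarz together with the elementary bound $\frac{c}{2}st\le \frac14 s^2+\frac{c^2}{4}t^2$ (i.e.\ $\frac14(s-ct)^2\ge 0$) gives
\[ \frac{c}{2}\left|\int_{\T(T)} (i\partial_{x_1}\psi)\cdot\psi\,dx\right| \le \frac{c}{2}\,\|\partial_{x_1}\psi\|_{L^2}\,\|\psi\|_{L^2} \le \frac14\|\nabla\psi\|_{L^2}^2+\frac{c^2}{4}\|\psi\|_{L^2}^2, \]
and hence
\[ I_T^c(\psi)\ \ge\ \frac14\|\nabla\psi\|_{L^2}^2+\frac14\int_{\T(T)}(1-|\psi|^2)^2\,dx-\frac{c^2}{4}\|\psi\|_{L^2}^2. \]

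Next I would extract a coercive term in $\|\psi\|_{L^2}$ from the potential. Expanding $(1-|\psi|^2)^2=1-2|\psi|^2+|\psi|^4$ and applying Hölder on the bounded cube $\T(T)$, one has $\|\psi\|_{L^2}^2\le T^{N/2}\|\psi\|_{L^4}^2$, so $\int_{\T(T)}|\psi|^4\,dx\ge T^{-N}\|\psi\|_{L^2}^4$ and therefore
\[ \int_{\T(T)}(1-|\psi|^2)^2\,dx\ \ge\ T^{-N}\|\psi\|_{L^2}^4-2\|\psi\|_{L^2}^2+T^N. \]
Setting $a=\|\nabla\psi\|_{L^2}^2$ and $b=\|\psi\|_{L^2}^2$, the last two displays combine into
\[ I_T^c(\psi)\ \ge\ \frac14\,a+\frac14 T^{-N}b^2-\Big(\frac12+\frac{c^2}{4}\Big)b+\frac14 T^N. \]

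To finish, observe that the $b$-dependent part is a quadratic with positive leading coefficient, hence bounded below by a constant $m=m(c,T)$, so that $I_T^c(\psi)\ge \frac14 a+m$. If $\|\psi\|^2=a+b\to+\infty$, then either $a\to+\infty$, whence $I_T^c(\psi)\to+\infty$ at once, or $a$ remains bounded and $b\to+\infty$, in which case the quartic term $\frac14 T^{-N}b^2$ dominates and again $I_T^c(\psi)\to+\infty$. In both cases $I_T^c$ is coercive. I do not expect a genuine difficulty here; the two points to watch are that Young's inequality leaves a strictly positive multiple of the Dirichlet energy, and that the negative $b$-contributions (from the cross term and from $-2|\psi|^2$) are of strictly lower order than $\int_{\T(T)}|\psi|^4$ — which is precisely where the boundedness of $\T(T)$ enters.
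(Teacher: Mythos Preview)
Your proof is correct and follows essentially the same approach as the paper: both control the momentum via Cauchy--Schwarz and Young's inequality, leaving a positive fraction of $\|\nabla\psi\|_{L^2}^2$, and then use the quartic growth of the Ginzburg--Landau potential to absorb the negative $\|\psi\|_{L^2}^2$ contribution. The only cosmetic difference is in the potential estimate: the paper uses the pointwise scalar inequality $(1-x^2)^2\ge 4\lambda x^2-K_\lambda$ (valid for every $\lambda>0$), which after integration directly yields $I_T^c(\psi)\ge \tfrac14\|\nabla\psi\|_{L^2}^2+(\lambda-\tfrac{c^2}{4})\|\psi\|_{L^2}^2-K_\lambda\,T^N$, whereas you expand $(1-|\psi|^2)^2$ and invoke H\"older on the bounded cube to get $\int|\psi|^4\ge T^{-N}\|\psi\|_{L^2}^4$; the paper's route gives a linear lower bound in $b=\|\psi\|_{L^2}^2$ in one stroke, while yours passes through a quadratic in $b$, but both conclude coercivity for the same reason.
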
 

\begin{proof} Indeed, using the fact that for every $\lambda >0$ there exists a positive constant $K_\lambda > 0$ (depending only on $\lambda$) such that $(1-x^2)^2 \geq 4 \lambda x^2 - K_\lambda$ for all $x\in\mathbb{R}$, we have
	\[
	\begin{array}{lcll}
	E(\psi) &=& \displaystyle \frac{1}{2} \int_{\T(T)} \vert \nabla \psi \vert ^2 \ dx + \frac{1}{4} \int _{\T(T)} (1-\vert \psi \vert ^2)^2\ dx \geq \\\\ &\geq & 
	\displaystyle \frac{1}{2 }\int_{\T(T)} \vert \nabla \psi \vert ^2 \ dx +\frac{1}{4} \int _{ \T(T) } (1-\vert \psi \vert ^2)^2\ dx \geq \\\\ &\geq& 
	\displaystyle \frac{1}{2 }\int_{\T(T)} \vert \nabla \psi \vert ^2 \ dx + \lambda \int _{\T(T)} \vert \psi \vert ^2\ dx - K_\lambda = \\\\ 
	&=& \displaystyle \frac{1}{2} \Vert \nabla \psi \Vert^2_{L^2} + \lambda \Vert   \psi \Vert_{L^2}^2 - K_\lambda,
	\end{array}
	\]
	for some $\lambda >0$ to be determined later. The H\"{o}lder inequality leads to
	\[
	\begin{array}{lcll}
	p(\psi) &=& \displaystyle  \frac{1}{2} \int_{\T(T)} \langle i\partial_{x_1} \psi , \psi \rangle \ dx  \leq  \frac{1}{2} \Vert \nabla \psi \Vert_{L^2} \Vert \psi \Vert_{L^2}.
	\end{array}
	\]
	In other words,
	\[
	I_T^c (\psi)\geq \frac{1}{2} \Vert \nabla \psi \Vert^2_{L^2} + \lambda \Vert   \psi \Vert_{L^2}^2 - K_\lambda -\frac{c}{2} \Vert \nabla \psi \Vert_{L^2} \Vert \psi \Vert_{L^2}.
	\]
	We now make use of the inequality
	$$  \Vert \nabla \psi \Vert_{L^2} \Vert \psi \Vert_{L^2} \leq \frac{1}{2c} \Vert \nabla \psi \Vert_{L^2}^2 + \frac{c}{2} \Vert \psi \Vert_{L^2}^2.$$
	Combining these two inequalities, 
	
	\[
	\begin{array}{lcll}
	I_T^c (\psi)&\geq& \displaystyle  \frac{1}{2} \Vert \nabla \psi \Vert^2_{L^2} + \frac{\lambda}{2} \Vert   \psi \Vert_{L^2}^2 - K_\lambda -\frac{c}{2} \Vert \nabla \psi \Vert_{L^2} \Vert \psi \Vert_{L^2} \geq \\\\
	&=& \displaystyle  \frac{1}{4}  \Vert \nabla \psi \Vert^2_{L^2} + \left(\lambda - \frac{c^2}{4}\right) \Vert  \psi \Vert^2_{L^2} - K_\lambda .
	\end{array}
	\]
	It suffices to choose $\lambda > \frac{c^2}{4}$ to conclude.

\end{proof}

As a consequence of the two previous lemmas, the functional $I_T^c$ attains its infimum. It remains to show that the minimizer is different from a constant solution. Observe that:

$$ I_T^c(0)= \frac{1}{4} T^N, \ I_T^c(e^{i \theta})= 0 \ \mbox{ for any } \theta \in \R.$$

Next lemma, due to \cite{Maris2}, will be the key to show that $I_T^c$ may achieve negative values if $T$ is sufficiently large.

\begin{lemma} [lemma 4.4 of \cite{Maris2}]\label{4.4}
	There exists a continuous map $R \mapsto \upsilon_R$ from $[2,\infty )$ to $H^1 (\mathbb{R}^N)$ such that $\psi_R\in C_0 (\mathbb{R}^N)$ for any $R\geq 2$ and the following estimates hold:
	\begin{enumerate}
		\item[$(i)$] $\displaystyle \int_{\mathbb{R}^N} \vert \nabla \upsilon_R\vert ^2 \ dx \leq A R^{N-2} \log R,$
		\item[$(ii)$] $ \begin{vmatrix} \displaystyle
		\int_{\mathbb{R}^N} (1-\vert (1+\upsilon_R)\vert^2)^2\ dx
		\end{vmatrix} \leq  B R^{N-2},$
		\item[$(iii)$] $\displaystyle \pi \omega_{1} (R-2)^{N-1} \leq P(1+\upsilon_R)\leq \pi \omega_{1} R^{N-1}.$
	\end{enumerate}
	where $A , B > 0$ are constants   and $\omega_1$ denotes the measure of the unit ball in $\mathbb{R}^N$.
\end{lemma}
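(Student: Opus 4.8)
This is Lemma~4.4 of \cite{Maris2}; let me sketch the construction. The map $1+\upsilon_R$ will be a ``vortex sphere'' of radius $R$: writing $x=(x_1,y)$ with $y\in\mathbb{R}^{N-1}$ and $s=|y|$, set
$$ S_R=\{x_1=0\}\times\{|y|=R\},\qquad D_R=\{x_1=0\}\times\{|y|\le R\}, $$
the $(N-2)$-sphere and the $(N-1)$-ball it bounds, and let $d(x)=\dist(x,S_R)=\sqrt{x_1^2+(s-R)^2}$. For the phase I would take the normalized generalized solid angle of $D_R$,
$$ \theta_R(x)=c_N\int_{D_R}\frac{x_1}{\bigl(x_1^2+|y-y'|^2\bigr)^{N/2}}\,dy', $$
with $c_N$ fixed so that $\theta_R$ has a jump of exactly $2\pi$ across $\interior(D_R)$, the orientation chosen so the momentum below is positive. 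Then $\theta_R$ is smooth off $D_R$, odd in $x_1$, tends to $0$ at infinity, and from this representation one reads off $|\nabla\theta_R(x)|\le C/d(x)$ near $S_R$ and $|\nabla\theta_R(x)|\le CR^{N-1}|x|^{-N}$ for $|x|\ge 2R$, with $\theta_R,\nabla\theta_R$ bounded in between. Fix a smooth $\eta\colon[0,\infty)\to[0,1]$ with $\eta\equiv0$ on $[0,1]$ and $\eta\equiv1$ on $[2,\infty)$, put $\rho_R(x)=\eta(d(x))$, let $\chi_R$ be a radial cut-off equal to $1$ on $\{|x|\le\Lambda_R\}$ and $0$ on $\{|x|\ge2\Lambda_R\}$ with $\Lambda_R\gg R$, and set
$$ 1+\upsilon_R\;=\;\rho_R\,e^{\,i\chi_R\theta_R}. $$
Since $\rho_R=0$ on $\{d\le1\}$ and the $2\pi$-jump of $\theta_R$ is invisible to $e^{i\theta_R}$ (with $\chi_R\equiv1$ near $D_R$), this is a continuous function equal to $1$ for $|x|\ge2\Lambda_R$, so $\upsilon_R\in C_0(\mathbb{R}^N)\cap H^1(\mathbb{R}^N)$; the $R$-dependence merely translates and scales the vortex together with cut-offs of fixed width, so $R\mapsto\upsilon_R$ is continuous into $H^1$.

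For $(ii)$, since $|1+\upsilon_R|=\rho_R$ and $\rho_R\equiv1$ off the fixed-radius tube $\mathcal T_R=\{d\le2\}$ around $S_R$, the integrand $(1-\rho_R^2)^2\le1$ is supported in $\mathcal T_R$, whose volume is $\le BR^{N-2}$. For $(i)$, off $D_R$ one has $|\nabla(1+\upsilon_R)|^2=|\nabla\rho_R|^2+\rho_R^2|\nabla(\chi_R\theta_R)|^2$, and I would split $\mathbb{R}^N$ into three regions. On the core $\{d\le2\}$: $|\nabla\rho_R|^2\le C$, and crucially $\rho_R^2|\nabla(\chi_R\theta_R)|^2\le C$ because $\rho_R$ vanishes where $|\nabla\theta_R|$ is singular — this cancellation is the whole point — so the contribution is $\lesssim|\mathcal T_R|\lesssim R^{N-2}$. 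On $\{d\ge2\}\cap\{|x|\le2R\}$: $\rho_R=\chi_R=1$ and $|\nabla\theta_R|^2\le Cd^{-2}$, and since the volume near $S_R$ factors as $R^{N-2}$ times a transverse $2$-dimensional measure $\lesssim r\,dr$, this contributes $\lesssim R^{N-2}\int_2^{3R}r^{-1}dr\lesssim R^{N-2}\log R$ — here is the logarithm. On $\{|x|\ge2R\}$: $|\nabla(\chi_R\theta_R)|\lesssim R^{N-1}|x|^{-N}+|\theta_R||\nabla\chi_R|$, and $\int_{|x|\ge2R}R^{2(N-1)}|x|^{-2N}dx\lesssim R^{N-2}$ while the cut-off term is $\lesssim R^{2(N-1)}\Lambda_R^{-N}\to0$ (this is why $\Lambda_R$ is taken large). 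Summing gives $(i)$.

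For $(iii)$, using $f\cdot g=\operatorname{Re}(f\bar g)$ a direct computation with $1+\upsilon_R=\rho_R e^{i\chi_R\theta_R}$ gives $(i\partial_{x_1}(1+\upsilon_R))\cdot(1+\upsilon_R)=-\rho_R^2\,\partial_{x_1}(\chi_R\theta_R)$, hence $P(1+\upsilon_R)=-\tfrac12\int_{\mathbb{R}^N}\rho_R^2\,\partial_{x_1}(\chi_R\theta_R)\,dx$. Integrating first in $x_1$ along each line $\{(\,\cdot\,,y)\}$: since $\chi_R\theta_R\to0$ as $x_1\to\pm\infty$, the classical integral $\int_{\mathbb{R}}\partial_{x_1}(\chi_R\theta_R)\,dx_1$ equals minus the jump of $\theta_R$ picked up crossing $D_R$, i.e.\ $-2\pi$ when $|y|<R$ and $0$ when $|y|>R$; moreover $\rho_R\equiv1$ on the whole line as soon as $\bigl||y|-R\bigr|>2$, and the remaining $y$ form a set of measure $O(R^{N-2})$ on which the inner integral stays bounded. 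Therefore
$$ \int_{\mathbb{R}^N}\rho_R^2\,\partial_{x_1}(\chi_R\theta_R)\,dx=-2\pi\,\omega_1 R^{N-1}+O(R^{N-2}),\qquad P(1+\upsilon_R)=\pi\omega_1 R^{N-1}+O(R^{N-2}); $$
a slightly sharper, sign-definite accounting of the annulus $\{R-2\le|y|\le R\}$, where $\rho_R<1$, then upgrades this to the two-sided bound in $(iii)$, the value $\pi\omega_1(R-2)^{N-1}$ being exactly what one may lose over the core.

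The delicate point is $(i)$: one must pin down the phase $\theta_R$ concretely enough to bound $|\nabla\theta_R|$ in each of the three regimes, and above all establish the near-core identity $\rho_R^2|\nabla\theta_R|^2=O(1)$, which is precisely what converts the otherwise divergent Dirichlet energy near the vortex into the harmless — and, for passing to the limit later, essential — factor $\log R$. Everything else (smoothness and compact support of $\rho_R e^{i\chi_R\theta_R}$, continuity in $R$, and the momentum bookkeeping) is routine.
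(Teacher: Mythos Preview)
The paper does not prove this lemma: it is simply quoted from \cite{Maris2} and used as a black box in the proof of Theorem~\ref{E}. Your sketch is a faithful outline of Mari\c s's vortex-ring construction --- modulus cut-off near the $(N-2)$-sphere $S_R$, phase given by the solid angle of the spanning disk $D_R$, outer cut-off $\chi_R$ --- and the three-region decomposition that produces the $R^{N-2}\log R$ bound in (i), together with the flux/jump computation for (iii), are the correct mechanisms. Since there is no in-paper proof to compare against, your write-up is more than what the paper itself supplies; if anything, note that the constant $\omega_1$ in the stated bound (iii) should be the volume of the unit ball in $\mathbb{R}^{N-1}$, which is exactly what your momentum calculation yields.
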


\begin{proof}[Proof of Theorem \ref{E}] By Lemmas \ref{wlsc} and \ref{ppp10}, the functional $I_T^c$ attains its infimum. Our aim now is to show that the minimizer cannot be a constant function.
	
\medskip Take $R$ sufficiently large so that $E(1+\upsilon_R)- c P(1+\upsilon_R)<0$, where $\upsilon_R$ is given in Lemma \ref{4.4}. We now take $\bar{T} > diam( supp \, \upsilon_R)$. For any $T> \bar{T}$ we can assume that $supp\, \upsilon_R \subset \T(T)$, up to a suitable translation. We define:

\begin{equation} \label{wR} w_R \mbox{ is the extension of $\upsilon_R$ by $0$ in $\T(T)$, and periodically to $\R^N$.} \end{equation} 

In this way we obtain that $1+w_R \in H^1_T(\R^N)$ and $I_T^c(1+w_R)<0$. As a consequence, the minimum of $I_T^c$ is negative and cannot be achieved by a constant function.

\end{proof}

\subsection{Proof of Theorem \ref{MP}} 

In the previous subsection we have proved the existence of a global minimizer at a negative value of the functional. Here we will be concerned with the existence of a mountain pass solution. For this, let us define:

$$ Z = \{ e^{i \theta}, \ \theta \in [0, 2\pi] \},$$
which is a smooth curve in $H^1_T(\R^N)$ of constant solutions to \eqref{TWc}. As commented before, $I_T^c(e^{i \theta})=0$ for any $\theta \in [0, 2\pi]$. In next result we study the behavior of $I_T^c$ around $Z$: 
	
\begin{lemma} \label{Z} If $c \in (0, \sqrt{2})$ the set $Z$ is a nondegenerate curve of local minimizers of $I_T^c$. 
\end{lemma}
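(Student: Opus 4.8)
The plan is to show two things: first, that $Z$ is a nondegenerate critical manifold, i.e.\ at each point $e^{i\theta_0}\in Z$ the kernel of the second variation $D^2 I_T^c(e^{i\theta_0})$ is exactly the one-dimensional tangent space $T_{e^{i\theta_0}}Z = \Span\{i e^{i\theta_0}\}$; and second, that the second variation is positive definite on a complement of this tangent direction, which combined with the smoothness of $I_T^c$ upgrades nondegeneracy to the statement that each point of $Z$ is a local minimizer (uniformly along $Z$, since $Z$ is compact and $I_T^c$ is invariant under the rotation $\psi\mapsto e^{i\alpha}\psi$, so it suffices to treat a single point, say $\theta_0=0$). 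Because the problem is invariant under $\psi \mapsto e^{i\alpha}\psi$, the curve $Z$ is an orbit of this $S^1$-action and the tangent direction $i$ automatically lies in the kernel; the content is that \emph{nothing else} does when $c<\sqrt 2$.

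First I would compute $D^2 I_T^c(1)[\phi,\phi]$ for $\phi = u + i v \in H^1_T(\R^N)$, $u,v$ real. A direct expansion of \eqref{lagrangian} gives
\[
D^2 I_T^c(1)[\phi,\phi] = \int_{\T(T)} |\nabla u|^2 + |\nabla v|^2 + 2 u^2 - c\int_{\T(T)} \big( (\partial_{x_1} v)\, u - (\partial_{x_1} u)\, v\big),
\]
the point being that the quadratic form of the Ginzburg--Landau term $\tfrac14(1-|\psi|^2)^2$ at $\psi=1$ only sees the real part $u$ (it contributes $2\int u^2$), while the momentum term contributes the indefinite cross term. Now expand $u$ and $v$ in the Fourier basis $e^{2\pi i k\cdot x/T}$, $k\in\Z^N$. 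The mode $k=0$ of $v$ is exactly the tangent direction $i$, and on it the form vanishes — that is the expected degeneracy. For every other mode the key computation is a $2\times 2$ quadratic form in the real and imaginary parts of the coefficients of $u$ and $v$: writing $\xi = 2\pi k/T$ and $\mu = |\xi|^2$, $\beta = \xi_1$ (the first component), one gets a form whose positivity reduces to checking that the relevant $2\times 2$ (or $4\times4$, pairing $\pm k$) matrix built from $\mu$, $\mu+2$, and $c\beta$ is positive definite. The determinant condition is essentially $(\mu)(\mu+2) > c^2\beta^2$, and since $\beta^2 \le \mu$ this is implied by $\mu(\mu+2) > c^2\mu$, i.e.\ $\mu + 2 > c^2$, i.e.\ $\mu > c^2 - 2$ — which holds for \emph{every} nonzero mode precisely because $c^2 < 2$. (For the $k=0$ mode of $u$ one simply has the form $2u_0^2 > 0$.) This shows the second variation is nonnegative, vanishing only on $\Span\{i\}$.

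The main obstacle, and the part needing care, is passing from "the second variation is positive definite transverse to $\Span\{i\}$" to "$e^{i\theta_0}$ is a \emph{local minimizer} in $H^1_T(\R^N)$", since positivity of $D^2$ at a point does not by itself give a local minimum for a nonquadratic functional in infinite dimensions. The clean way is a quantitative coercivity statement: I would show there is $\delta>0$ with
\[
D^2 I_T^c(1)[\phi,\phi] \ge \delta\, \|\phi\|^2 \qquad \text{for all } \phi \perp i,
\]
where $\perp$ is the $H^1_T$-orthogonal complement of the tangent line; this follows from the Fourier computation above because the gap $\mu + 2 - c^2$ is bounded below uniformly in $k\ne 0$ (indeed $\mu\ge (2\pi/T)^2$ and $\mu+2-c^2 \to\infty$), while the $k=0$ mode of $u$ is handled by the $2u_0^2$ term. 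Then a Taylor expansion of $I_T^c$ with the remainder controlled by the $H^1\hookrightarrow L^4$ embedding — writing $\psi = e^{i\theta_0}(1+\phi)$ and using $|I_T^c(\psi) - I_T^c(e^{i\theta_0}) - \tfrac12 D^2I_T^c(e^{i\theta_0})[\phi,\phi]| \le C\|\phi\|^3$ for $\|\phi\|$ small — gives $I_T^c(\psi) \ge I_T^c(e^{i\theta_0})$ for $\psi$ in a neighborhood of $Z$, with the neighborhood uniform along $Z$ by rotational invariance and compactness. This establishes that $Z$ is a nondegenerate curve of local minimizers.
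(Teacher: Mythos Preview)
Your argument is correct and complete; in fact you carry the conclusion slightly further than the paper does, since you spell out the Taylor remainder step (controlled by $H^1\hookrightarrow L^4$) needed to pass from coercivity of the Hessian to genuine local minimality, which the paper leaves implicit.

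The route, however, is genuinely different. After computing the same second variation
\[
(I_T^c)''(1)[\phi,\phi]=\int_{\T(T)}|\nabla u|^2+|\nabla v|^2+2u^2 \;-\; c\!\int_{\T(T)}(i\partial_{x_1}\phi)\cdot\phi,
\]
the paper does \emph{not} diagonalize in Fourier. Instead it applies the weighted Young inequality
\[
2c\left|\int_{\T(T)}(\partial_{x_1}v)\,u\right|\;\le\;\frac{c}{\sqrt 2}\int_{\T(T)}|\nabla v|^2+\sqrt 2\,c\int_{\T(T)}u^2,
\]
which immediately yields
\[
(I_T^c)''(1)[\phi,\phi]\;\ge\;\int_{\T(T)}|\nabla u|^2+\Big(1-\tfrac{c}{\sqrt 2}\Big)|\nabla v|^2+\big(2-\sqrt 2\,c\big)u^2.
\]
Both leftover coefficients are strictly positive precisely when $c<\sqrt 2$, and a single application of Poincar\'{e} to $v$ (using $\int v=0$ from $\phi\perp i$) gives the coercive lower bound $\varepsilon\|\phi\|^2$. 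Your Fourier approach is more systematic and makes the threshold $c^2<2$ appear through the determinant condition $\mu(\mu+2)>c^2\beta^2$ (using $\beta^2\le\mu$); it also gives, in principle, the exact spectrum of the Hessian. The paper's approach is shorter and avoids any mode-by-mode bookkeeping, at the cost of a slightly less sharp inequality. Either way one lands on the same coercivity estimate on the $H^1$-orthogonal complement of $T_{e^{i\theta_0}}Z$.
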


\begin{proof} The proof is based on the study of the second derivative $(I_T^c)''(e^{i \theta})$. Of course this operator is $0$ on the tangent space to $Z$, or, in other words,
$$ (I_T^c)''(e^{i \theta})[i e^{i \theta}]=0.$$
The proof will be concluded if we show that $(I_T^c)''(e^{i \theta})(\phi, \phi)$ is positive definite for $\phi$ orthogonal to $i e^{i \theta}$.

By phase invariance, we can restrict ourselves to the case $\theta=0$. Observe that, denoting $ u= Re\, \phi$, $v = Im \, \phi$, we have:

$$ \langle \phi, \ i \rangle = \int_{\T(T)} \phi \cdot i = \int_{\T(T)} v.$$

We now compute:

\begin{equation} \label{segunda} (I_T^c)''(1)[\phi, \phi]= \int_{\T(T)} |\nabla \phi|^2 + 2 (\phi \cdot 1 )^2 - c (i \partial_{x_1} \phi) \cdot \phi. \end{equation}

We now check that,

$$ (\phi \cdot 1)^2= u^2,$$

and, integrating by parts,

$$\int_{\T(T)}  (i \partial_{x_1} \phi) \cdot \phi= \int_{\T(T)} (\partial_{x_1} u) v - (\partial_{x_1} v)u = -2 \int_{\T(T)} (\partial_{x_1} v)u.$$

Hence,

$$ c \left | \int_{\T(T)}  (i \partial_{x_1} \phi) \cdot \phi \right | = 2c \left |  \int_{\T(T)} (\partial_{x_1} v)u \right | \leq \frac{c}{\sqrt{2}} |\nabla v|^2 + \sqrt{2} c u^2.$$

Plugging this estimate in \eqref{segunda}, we have:

$$ (I_T^c)''(1)[\phi, \phi] \geq \int_{\T(T)} |\nabla u|^2 + |\nabla v|^2 + 2 u ^2 - \frac{c}{\sqrt{2}} |\nabla v|^2 - \sqrt{2} c u^2 $$ 
$$=\int_{\T(T)} |\nabla u|^2 + \left (1- \frac{c}{\sqrt{2}} \right )|\nabla v|^2 + \left (2 - \sqrt{2} c \right)  u ^2.$$

Observe now that if $\phi$ is orthogonal to the constant function $i$, then $ \int_{\T(T)} v =0$ and the Poincar\'{e} inequality implies that:

$$ \int_{\T(T)} |\nabla v|^2 \geq c(T) \| v \|^2,$$
for some $c(T)>0$. Then,

$$ (I_T^c)''(1)[\phi, \phi] \geq \e \|\phi\|^2,$$
for some $\e>0$, concluding the proof.

\end{proof}

The above result, together with Lemma \ref{4.4}, imply the presence of a Mountain Pass geometry. Next proposition is devoted to the study of the Palais-Smale property.

\begin{proposition}\label{qqq10}
	The functional $I_T^c$ satisfies the Palais-Smale condition for any $c,T > 0$.
\end{proposition}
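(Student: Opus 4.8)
\textbf{Proof proposal for Proposition \ref{qqq10}.}

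The plan is to verify the Palais-Smale condition directly, exploiting the coercivity already established in Lemma \ref{ppp10} together with the compactness of the Sobolev embeddings on the bounded domain $\T(T)$. Let $\{\psi_n\} \subset H^1_T(\R^N)$ be a Palais-Smale sequence, i.e. $I_T^c(\psi_n)$ is bounded and $(I_T^c)'(\psi_n) \to 0$ in $H^1_T(\R^N)^*$. The first step is boundedness: since $I_T^c(\psi_n)$ is bounded and $I_T^c$ is coercive by Lemma \ref{ppp10}, the sequence $\{\psi_n\}$ is bounded in $H^1_T(\R^N)$. (Here one does not even need the derivative information for this step.) Consequently, up to a subsequence, $\psi_n \rightharpoonup \psi$ weakly in $H^1_T(\R^N)$, and by the Rellich-Kondrachov theorem $\psi_n \to \psi$ strongly in $L^2(\T(T))$ and in $L^4(\T(T))$.

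The second step is to upgrade weak convergence to strong convergence in $H^1_T(\R^N)$, which amounts to showing $\|\nabla \psi_n\|_{L^2} \to \|\nabla \psi\|_{L^2}$. The standard device is to test the derivative against $\psi_n - \psi$: one has
\[
\langle (I_T^c)'(\psi_n), \psi_n - \psi \rangle = \int_{\T(T)} \nabla \psi_n \cdot \nabla(\psi_n - \psi) + \int_{\T(T)} (|\psi_n|^2 - 1)\,\psi_n \cdot (\psi_n - \psi) - \frac{c}{2}\int_{\T(T)} \big[ (i\partial_{x_1}\psi_n)\cdot(\psi_n-\psi) + (i\partial_{x_1}(\psi_n-\psi))\cdot\psi_n \big].
\]
The left-hand side tends to $0$ because $(I_T^c)'(\psi_n) \to 0$ and $\{\psi_n - \psi\}$ is bounded. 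For the lower-order terms on the right: the cubic term $\int (|\psi_n|^2-1)\psi_n \cdot (\psi_n - \psi)$ is controlled by $\|\,|\psi_n|^2-1\,\|_{L^2}\|\psi_n\|_{L^4}\|\psi_n - \psi\|_{L^4}$ via H\"older, and since $\psi_n \to \psi$ strongly in $L^4$ and $\{\psi_n\}$ is bounded in $L^4$, this term vanishes in the limit. The momentum contribution splits: $\int (i\partial_{x_1}\psi_n)\cdot(\psi_n - \psi) \to 0$ because $\partial_{x_1}\psi_n$ is bounded in $L^2$ while $\psi_n - \psi \to 0$ in $L^2$; and $\int (i\partial_{x_1}(\psi_n - \psi))\cdot \psi_n \to 0$ because $\partial_{x_1}(\psi_n - \psi) \rightharpoonup 0$ weakly in $L^2$ while $\psi_n \to \psi$ strongly in $L^2$ (so the pairing, a product of a weakly null sequence with a strongly convergent one, goes to zero). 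Hence all terms except the gradient term vanish, forcing
\[
\int_{\T(T)} \nabla \psi_n \cdot \nabla(\psi_n - \psi) \to 0.
\]

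The third and final step: since $\nabla \psi_n \rightharpoonup \nabla \psi$ weakly in $L^2(\T(T))$, we also have $\int \nabla \psi \cdot \nabla(\psi_n - \psi) \to 0$; subtracting gives $\int |\nabla(\psi_n - \psi)|^2 \to 0$, i.e. $\nabla \psi_n \to \nabla \psi$ strongly in $L^2$. Combined with $\psi_n \to \psi$ in $L^2$, this yields $\psi_n \to \psi$ strongly in $H^1_T(\R^N)$, which is exactly the Palais-Smale condition. I do not expect a genuine obstacle here: the proof is a routine compactness argument, and the only point requiring a little care is keeping track of which factor in each bilinear term is strongly convergent and which is merely weakly convergent, so that each product is correctly seen to vanish; the boundedness of the domain $\T(T)$ is what makes all the embeddings compact and removes the usual difficulties of working on $\R^N$.
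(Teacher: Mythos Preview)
Your argument is correct and follows essentially the same route as the paper: coercivity gives boundedness, Rellich--Kondrachov gives strong $L^2$/$L^4$ convergence, and testing the vanishing derivative yields convergence of the Dirichlet integral. The only cosmetic difference is that you test $(I_T^c)'(\psi_n)$ against $\psi_n-\psi$ directly, whereas the paper tests against $\psi_n$ and $\psi$ separately and compares the limits; both variants lead to the same conclusion.
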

\begin{proof}
	Let $\psi_n$ be a Palais-Smale sequence for $I_T^c$, that, is, a sequence such that:
	
	$$ I_T^c(\psi_n) \mbox{ is bounded, } \ (I_T^c)'(\psi_n) \to 0 \mbox{ in } (H_T)^{-1} \mbox{ sense.}$$

By Lemma \ref{ppp10}, we conclude that $\psi_n$ is a bounded sequence. Up to a subsequence, we can assume that $\psi_n \weakto \psi$. Our aim now is to show strong convergence.

By the Rellich-Kondrachov Theorem we have that $\psi_n \to \psi$ in $L^2$ and $L^4$ sense. As in Lemma \ref{wlsc}, we have: 

$$ \liminf_{n \to + \infty} \int_{\T(T)} |\nabla \psi_n|^2 \geq \int_{\T(T)} |\nabla \psi|^2.$$

$$ \lim_{n \to +\infty} \int_{\T(T)} (i \partial_{x_1} \psi_n)\psi_n = \int_{\T(T)} (i \partial_{x_1} \psi)\cdot \psi,$$

Observe that,

$$ 0 \leftarrow (I_T^c)'(\psi_n)(\psi_n) = \int_{\T(T)} |\nabla \psi_n|^2 -c (i \partial_{x_1} \psi_n)\psi_n - |\psi_n|^2 + |\psi_n|^4.$$

Moreover,

$$ 0 \leftarrow (I_T^c)'(\psi_n)(\psi_n) = \int_{\T(T)} \nabla \psi_n \cdot \nabla \psi ^2 -c (i \partial_{x_1} \psi_n)\cdot \psi - \psi_n \cdot \psi + |\psi_n|^2 \psi_n \cdot \psi $$ $$ \rightarrow  \int_{\T(T)} |\nabla \psi|^2 -c (i \partial_{x_1} \psi)\psi - |\psi|^2 + |\psi|^4.$$

As a consequence we conclude that

$$ \int_{\T(T)} |\nabla \psi_n|^2 \to \int_{\T(T)} |\nabla \psi|^2,$$
which implies that $\psi_n \to \psi$ in $H^1_T(\R^N)$. The proof is completed.

\end{proof}

\begin{proof}[Proof of Theorem \ref{MP}]

By Lemma \ref{Z}, there exists $\delta_0>0$ such that, for any $\delta \in (0, \delta_0)$, there exists $\e>0$ such that $I_T^c(\psi) > \e$ for any $\psi \in \partial N(\delta)$, where
\begin{equation} \label{piu} N(\delta)= \{\psi \in H^1_T(\R^N): \ d(\psi, Z) < \delta\}.\end{equation}

Here $d(\psi, Z)= \min \{\|\psi - z \|, \ z \in Z  \}$.

Take $\bar{T}$ as given by Theorem \ref{E}, and $w_R$ as in \eqref{wR}. Clearly, $1+ w_R \notin N(\delta)$. Define:

$$ \gamma(T) =\inf_{\alpha \in\Gamma}\max_{t\in [0,1]} I_T^c (\alpha (t)),$$
where 
$$\Gamma= \{ \alpha:[0,1] \to H^1_T(\R^N) \mbox{ continuous:}\ \alpha(0)=1, \ \alpha(1) = 1 + w_R\}.$$

By \eqref{piu}, $\gamma(T) > \e >0$, whereas $I_T^c(1)=0$, $I_T^c(1+w_R)<0$. By the well-known Mountain-Pass lemma (see for instance \cite{Ambrosetti}), we conclude that there exists $\psi$ such that $(I_T^c)'(\psi)=0$, $I_T^c(\psi) = \gamma(T)$.

\medskip We only need now to show that $\gamma(T)$ is bounded in $T$. For this, take $\alpha_0 \in \Gamma$, $\alpha_0(t)= 1 + t w_R$. Observe that by the definition of $w_R$, $I(\alpha_0(t))$ is independent of $T > \bar{T}$ for any $t \in [0,1]$. If we denote:

$$M=\max_{t\in [0,1]} I_T^c (\alpha_0 (t)),$$

we conclude that $\gamma(T) \leq M$, concluding the proof.

\end{proof}

\section{All solutions are constant if $T$ is small}

This section is devoted to prove Theorem \ref{daru}. First, we state and prove a useful lemma, that can be seen as a version of the Poincar\'{e} inequality that fits perfectly in our setting.

\begin{lemma}\label{Druiz}
	Let $f:\T(T) \to \mathbb{R}$ be a measurable function satisfying $1/2 \leq f \leq 2$ on $\T(T)$. Then for all $T>0$ there exists $C_T >0$ such that  
	$$  \int_{\T(T)} \vert \nabla u (x) \vert ^2 \ dx \geq C_T \int_{\T(T)} \vert u(x)\vert^2 \ dx$$
	for any $u\in H^1 (\T(T))$ with 
	$$  \int_{\T(T)} f(x) u(x)\ dx = 0.$$
	
	Futhermore, $C_T$ does not depend on $f$ and $C_T \to  + \infty$ as $T \to 0$. 
\end{lemma}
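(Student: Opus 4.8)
The plan is to reduce the weighted statement to the ordinary Poincaré–Wirtinger inequality on the cube $\T(T)$ by a compactness/contradiction argument, and then to track the dependence on $T$ by a scaling argument.

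First I would fix $T>0$ and argue by contradiction. Suppose no such $C_T>0$ works; then there exist functions $f_n$ with $1/2\le f_n\le 2$ and $u_n\in H^1(\T(T))$ with $\int_{\T(T)} f_n u_n\,dx=0$, normalized so that $\|u_n\|_{L^2(\T(T))}=1$, yet $\int_{\T(T)}|\nabla u_n|^2\,dx\to 0$. Then $\{u_n\}$ is bounded in $H^1(\T(T))$, so up to a subsequence $u_n\weakto u$ in $H^1$ and $u_n\to u$ in $L^2$ by Rellich–Kondrachov; hence $\|u\|_{L^2}=1$, and $\nabla u_n\to 0$ in $L^2$ forces $\nabla u=0$, so $u\equiv a$ is a nonzero constant with $a^2 T^N=1$. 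Also $f_n$ is bounded in $L^\infty$, hence (again up to a subsequence) $f_n\weakto f$ in, say, $L^2(\T(T))$, with $1/2\le f\le 2$ a.e. Passing to the limit in the constraint, $\int_{\T(T)} f_n u_n\,dx\to \int_{\T(T)} f u\,dx = a\int_{\T(T)} f\,dx$. Since $f\ge 1/2>0$ we get $\int_{\T(T)} f\,dx>0$, contradicting $a\ne 0$. This establishes existence of $C_T>0$ for each fixed $T$, and the argument is manifestly independent of the particular $f$ since the bound $1/2\le f\le 2$ is all that was used.

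For the scaling assertion, I would change variables $x = T y$, $y\in \T(1)=[0,1]^N$. Writing $\tilde u(y)=u(Ty)$ and $\tilde f(y)=f(Ty)$, the constraint becomes $\int_{\T(1)}\tilde f\,\tilde u\,dy=0$ with $1/2\le\tilde f\le 2$, and $\int_{\T(T)}|\nabla u|^2\,dx = T^{N-2}\int_{\T(1)}|\nabla\tilde u|^2\,dy$, $\int_{\T(T)}|u|^2\,dx = T^{N}\int_{\T(1)}|\tilde u|^2\,dy$. Applying the already-proven inequality on $\T(1)$ with its constant $C_1>0$ (which, crucially, does not depend on $\tilde f$), we obtain $\int_{\T(T)}|\nabla u|^2\,dx \ge C_1 T^{-2}\int_{\T(T)}|u|^2\,dx$, so one may take $C_T = C_1 T^{-2}$, which indeed tends to $+\infty$ as $T\to 0$.

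The only delicate point — and hence the place I would be most careful — is justifying that the contradiction argument does not secretly depend on $f_n$ through some loss of compactness: here it does not, because the hypothesis $1/2\le f_n\le 2$ gives a uniform $L^\infty$ (hence reflexive $L^p$) bound, so weak compactness of $\{f_n\}$ is automatic, and only the sign/positivity of the weak limit $f$ is needed to close the argument. An alternative, more hands-on route that avoids weak convergence of $f_n$ altogether: decompose $u=\bar u + v$ with $\bar u = T^{-N}\int_{\T(T)}u$ the mean and $\int_{\T(T)}v=0$; the standard Poincaré–Wirtinger inequality gives $\int|\nabla v|^2\ge \mu_T\int v^2$ with $\mu_T$ the first nonzero Neumann eigenvalue of $-\Delta$ on $\T(T)$ (and $\mu_T = \mu_1/T^2 \to\infty$), while the constraint $\int f u=0$ yields $|\bar u|\,\int f = |\int f v| \le \|f\|_{L^2}\|v\|_{L^2}\le 2T^{N/2}\|v\|_{L^2}$, and $\int f\ge \tfrac12 T^N$, so $|\bar u|\le 4T^{-N/2}\|v\|_{L^2}$; combining $\|u\|_{L^2}^2 = T^N\bar u^2 + \|v\|_{L^2}^2 \le 17\|v\|_{L^2}^2$ with $\|\nabla u\|_{L^2}^2=\|\nabla v\|_{L^2}^2\ge \mu_T\|v\|_{L^2}^2$ gives the claim with $C_T = \mu_T/17$. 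I would likely present this second argument, as it is quantitative and makes the $T$-dependence transparent without any subsequence extraction.
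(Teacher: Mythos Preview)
Your proof is correct but follows a different route from the paper. The paper treats the infimum of $\int|\nabla u|^2/\int f|u|^2$ over the constraint set $\{\int fu=0\}$ as a weighted second eigenvalue $\lambda_T(f)$, and then invokes the Courant--Fischer min-max characterization (infimum over two-dimensional subspaces of the maximum of the Rayleigh quotient) to obtain the monotonicity $\lambda_T(f)\ge\lambda_T(2)=\lambda_T(1)/2$ in one stroke; this yields $C_T=\lambda_T(1)/4$ directly, with the $T\to 0$ behavior read off from the classical eigenvalue. Your second argument---splitting $u=\bar u+v$, controlling the mean $\bar u$ from the constraint via $|\bar u|\int f=|\int fv|$, and applying Poincar\'e--Wirtinger to the mean-zero part $v$---is more elementary in that it avoids any spectral machinery beyond the standard Poincar\'e constant, and it is equally quantitative (you recover $C_T=\mu_T/17$, the same $T^{-2}$ scaling). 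Your first, compactness-based argument is also fine; the only small remark is that in this periodic setting $\mu_T$ is the first nonzero eigenvalue of $-\Delta$ on the flat torus $\T(T)$ rather than a Neumann eigenvalue on the cube, though the scaling $\mu_T=\mu_1 T^{-2}$ and hence the conclusion are unaffected.
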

\begin{proof} Let us define the eigenvalue

	$$ \lambda_T(f)= \inf  \left\{ \frac{\int_{\T(T)} |\nabla u|^2 }{ \int_{\T(T)} f |u|^2 }, u \in H^1_T \setminus\{0\}, \ \int_{\T(T)} f u =0 \right \}>0.$$
	
	Note that the particular case $f=1$ is related to the classical Poincar\'{e} inequality with constant denoted by $\lambda_T (1)$. From the above formula it is obvious that $\lambda_T (2) = \lambda_T(1)/2$. 
	
	We now take advantage of the monotonicity property: $$ \lambda_T (f) > \lambda_T (2). $$ Such property follows at once from the min-max characterization of $\lambda_T(f)$ (see, for instance, \cite[Chapter 11]{strauss}):
	
	$$ \lambda_T(f)= \inf_{U} \left \{ \max_{u \in U}  \left \{ \frac{\int_{\T(T)} |\nabla u|^2 }{ \int_{\T(T)} f |u|^2 }, \ u \in U \setminus\{0\} \right \} U \subset H_T^1, \ dim(U)=2 \right \}. $$
	
	As a consequence, if $\int_{\T(T)} f(x) u(x)=0$,
	
	$$  \int_{\T(T)} \vert \nabla u (x) \vert ^2 \ dx \geq \lambda_T(f)  \int_{\T(T)} f(x) \vert u(x)\vert^2 \ dx  \geq  \frac{\lambda_T (1)}{2}\int_{\T(T)} f(x) \vert u(x)\vert^2 \ dx  $$$$ \geq \frac{\lambda_T (1)}{4}\int_{\T(T)} \vert u(x)\vert^2 \ dx  . $$
	Then we can take $C_T = \lambda_T(1)  /4$. Finally, it is well known that $\lambda_T(1)$ diverges when $T$ is small, concluding the proof.
	
\end{proof}

\begin{proof}[Proof of Theorem \ref{daru}]
	Suppose that $\lbrace \psi_n\rbrace$ is a sequence of solutions to \begin{equation} \label{Tpequeno} \Delta \psi_n + ic\partial_{x_1} \psi_n + (1-\vert \psi_n \vert^2) \psi_n = 0  \mbox{ on } \T(T_n) \end{equation} with $T_n \to 0$. We aim to show that there exists $m \in \N$ such that $\psi_n$ is constant if $n \geq m$. Just by integration of the equation on $\T(T_n)$ one obtains:
	
	\begin{equation} \label{mancava} \int_{\T(T_n)} (1-|\psi_n|^2) \psi_n =0. \end{equation}
	
	This identity will be of use in what follows.
	
	\medskip 
	
	By \cite{farina}, all solutions are uniformly bounded, which implies uniform  $C^k$ bounds via elliptic estimates, for any $k \in \N$. As a consequence, $\lbrace \psi_n\rbrace$  converges to a constant function $\psi_0$ in $C^k$ sense. Such constant must solve \eqref{TWc}, hence we have two possibilities: $\psi_0 = 0$ or $\psi_0$ is a constant of modulus one.\\
	
	\textbf{Case 1: $\psi_0 = 0$.} Multiplying the equation \eqref{Tpequeno} by $\psi_n$ and integrating, yields
	$$ \int_{\T(T_n)} \vert \nabla \psi _n(x) \vert ^2  \ dx  - c \int_{\T(T_n)} \langle i\psi_n (x) ,  \partial_{x_1} \psi_n (x) \rangle\ dx - \int_{\T(T_n)} (1 - \vert \psi_n (x)\vert ^2 ) \psi_n (x) \ dx = 0.$$
	Now, we compute this useful estimate in light of Cauchy-Schwartz inequality:
	$$ \int_{\T(T_n)} \vert \langle c\psi_n (x) ,  \partial_{x_1} \psi_n (x) \rangle \vert \ dx  \leq  \frac{c^2}{2} \int_{\T(T_n)}  \vert \psi_n (x) \vert^2 \ dx + \frac{1}{2} \int_{\T(T_n)} \vert \nabla \psi_n (x)\vert ^2 \ dx.  $$ 
	This inequality allows us to write the following
	$$ \begin{array}{lcll}
	0 &=& \displaystyle  \int_{\T(T_n)} \vert \nabla \psi_n (x) \vert ^2  - (1 - \vert \psi_n (x) \vert^2) \vert \psi_n (x) \vert ^2   -  \langle ic\psi_n (x) , \partial_{x_1} \psi_n (x) \rangle   \geq \\\\
	&\geq &  \displaystyle  \int_{\T(T_n)} \vert \nabla \psi_n (x) \vert ^2   -  \vert \psi_n (x)\vert ^2  - \frac{c^2}{2} \vert \psi_n (x) \vert^2  - \frac{1}{2}  \vert \nabla \psi_n (x)\vert ^2  = \\\\
	&=& \displaystyle  \frac{1}{2}\int_{\T(T_n)} \vert \nabla \psi_n (x) \vert ^2  - (1+c^2/2)\int_{\T(T_n)} \vert \psi_n (x)\vert ^2  \geq \\\\ &\geq & \displaystyle \left( \frac{C_{T_n}}{2}  -1 - \frac{c^2}{2} \right) \int_{\T(T_n)}  \vert \psi_n (x) \vert^2, 
	\end{array} $$
	where in the last inequality we have used Lemma \ref{Druiz} applied to $f = 1 - \vert \psi_n \vert^2$, taking advantage of \eqref{mancava}. Observe now that if $T_n$ is sufficiently small, $\frac{C_{T_n}}{2}  -1 - \frac{c^2}{2}>0$, which implies that $\psi_n$ is identically equal to $0$.
	
	\medskip
	
	\textbf{Case 2: $|\psi_0| = 1$.} By the phase invariance, we can assume that $\psi_0=1$. In this case, the function $\psi_n$ extended to $\R^N$ is vortexless for large $n$, and hence there exists a lifting $\psi_n  = \rho_n e^{i\theta_n}$ with: 
	$$\rho_n : \R^N \to \R^+, \ T_n\mbox{-periodic}, \rho_n \to 1 \mbox{ in  $C^k$ sense}, $$ 
	$$\theta_n: \R^N \to \R|_{2 \pi \Z}, \ T_n\mbox{-periodic}.$$ 
	
	Observe now that since $\psi_n \to 1$ in $C^1 $ sense, the oscillation $\max \theta_n - \min \theta_n$ converges to 0. This implies that, for large $n$,
	
$$\theta_n: \R^N \to \R, \ T_n\mbox{-periodic}, \ \theta_n \to 0 \mbox{ in  $C^k$ sense}.$$ 
	
	Again by phase invariance, we can make small rotations such that:
	\begin{equation}\label{BB}
	\int_{\T(T_n)} \theta_n (x)\ dx = 0 .
	\end{equation}
	In terms of the lifting, equation (\ref{mancava}) reads as:
	\begin{equation} \label{CC} \int_ {\T(T_n) } (1 - \rho_n^2(x)) \rho _n(x) e^{i \theta_n (x)} \ dx  = 0.\end{equation}
	
With all these preliminaries, we are now ready to begin our argument. Multiplying equation \eqref{Tpequeno} by $\psi_n$ and integrating we obtain, in terms of the lifting.

	\begin{equation}\label{eqlift}
	\int_{\T(T_n)} \vert \nabla \rho_n \vert ^2 + \rho^2 \vert \nabla \theta_n \vert^2  + c (\rho_n^2 - 1)\partial_{x_1} \theta _n - (1 - \rho_n^2 ) \rho_n^2  =0. 
	\end{equation}
	
Observe that the periodicity of $\theta_n$ has been used in the above expression.

	Note that, by Cauchy-Schwartz,
	$$ \int _ {\T(T_n) } \vert c (\rho_n^2 - 1)\partial_{x_1} \theta_n   \vert  \leq \frac{1}{2} \int_ {\T(T_n) } \vert \partial_{x_1} \theta_n \vert^2  + \frac{c^2}{2} \int _ {\T(T_n) } (1 - \rho_n^2)^2 .  $$
	Taking into account \eqref{CC} we obtain:
	$$ \int _ {\T(T_n) }  (1 - \rho_n^2  ) \rho_n^2    = \underbrace{\int _ {\T(T_n) } (1 - \rho_n^2  ) ( \rho_n^2   - \rho _n    )}_{A} +  \underbrace{\int _ {\T(T_n) } (1 - \rho_n^2  ) \rho   (1-e^{i\theta_n  })}_{B}.  $$
	For sufficiently large $n$ we have that:
	$$ A = - \int _ {\T(T_n) } ( \rho_n+ \rho_n^2  ) (1-\rho _n  )^2 \   \leq 3  \int _ {\T(T_n) }(1-\rho_n  )^2  \  .$$
	In addition, using the fact that $ \vert 1 - e^{i t} \vert \leq \vert t \vert$ for all $t\in \mathbb{R}$ and Cauchy-Schwartz,
	$$ 
	\begin{array}{lcll}
	\vert B \vert &=& \displaystyle \begin{vmatrix} \displaystyle 
	\int_ {\T(T_n) }  (1-\rho_n ^2  ) \rho_n   (1 - e^{i\theta_n  } )    
	\end{vmatrix} \leq \int _ {\T(T_n) }  (\rho_n   + \rho_n^2 ) \vert 1 - \rho_n    \vert   \vert \theta_n   \vert    \leq \\\\ &\leq & \displaystyle 3 \int _ {\T(T_n) } \vert 1 - \rho_n   \vert \vert \theta_n   \vert   \leq \frac{3}{2} \int _ {\T(T_n) } \vert 1 - \rho_n   \vert^2   + \frac{3}{2} \int   _ {\T(T_n) }  \vert \theta_n   \vert ^2 .
	\end{array}  
	$$
	Using these estimates in (\ref{eqlift}),
	$$ 
	\begin{array}{lcll}
	0&=& \displaystyle  \int_{ \T(T_n)} \vert \nabla \rho _n  \vert ^2 + \rho_n^2   \vert \nabla \theta_n   \vert^2  + c (\rho_n^2   - 1)\partial_{x_1} \theta   - (1 - \rho_n^2  ) \rho^2   \geq \\\\ &\geq& \displaystyle 
	\int_{ \T(T_n)}    \vert \nabla \rho_n   \vert ^2   +  \Big(\rho_n^2  -1/2 \Big ) \vert \nabla \theta _n  \vert^2    - \frac{c^2}{2} (1 - \rho^2 )^2   \\\\ &&
	- \displaystyle  3 \int _{ \T(T_n)}   (1- \rho _n )^2   + \frac{1}{2} (1 - \rho_n  )^2   + \frac{1}{2} \vert \theta _n \vert ^2.
	\end{array} 
	$$
In sum, we obtain that for sufficiently large $n$,

\begin{equation} \label{later} 0 \geq  \displaystyle
\int_{ \T(T_n)} \vert  \nabla \rho _n  \vert ^2   - \Big (\frac{c^2+9}{2}  \Big ) (1-\rho_n )^2  +  \frac{1}{4} \vert \nabla \theta_n   \vert^2   - \frac{3}{2} \vert \theta_n   \vert^2.   \end{equation}	
	
We now plan to apply Lemma \ref{Druiz} to the functions $\theta_n$ and $(1-\rho_n)$. Actually, (\ref{BB}) allows us to use the classical Poincar\'{e} inequality to $\theta_n$, and for large $n$ we have that 

$$  \int_{\T(T_n)}   \vert \nabla \theta_n   \vert^2   \geq  7 \int_{\T(T_n)} \vert \theta_n   \vert^2.$$

With respect to $(1-\rho_n)$, observe that taking the real part of \eqref{CC} we obtain:

$$ \int_{ \T(T_n)} (1-\rho_n ) (\rho_n + \rho_n^2 ) \cos(\theta_n )=0.$$

Hence we can use Lemma \ref{Druiz} with $f=  \frac{1}{2} (\rho_n + \rho_n^2 ) \cos(\theta_n )$ to conclude that for large $n$, we have 

$$ 	\int_{ \T(T_n)} \vert  \nabla \rho _n  \vert ^2  \geq \Big (\frac{c^2+9}{2} +1 \Big ) (1-\rho_n )^2.$$

As a consequence, the inequality \eqref{later} can hold only if $\rho_n =1$, $\theta_n =0$ for all $x \in \T(T_n)$ and $n$ large enough. This finishes the proof.

\end{proof}

\end{document}